\documentclass[11pt]{amsart}
\usepackage[english]{babel}
\usepackage[T1]{fontenc}
\usepackage[utf8]{inputenc}
\usepackage{amsmath,amssymb,amsthm}
\usepackage{mathtools}
\usepackage{enumitem}
\usepackage[colorlinks=true,linkcolor=blue,citecolor=blue,urlcolor=blue]{hyperref}

\hypersetup{
  colorlinks=true,
  linkcolor=blue,
  citecolor=blue,
  urlcolor=blue
}

\newtheorem{theorem}{Theorem}
\newtheorem{lemma}[theorem]{Lemma}
\newtheorem{definition}[theorem]{Definition}
\newtheorem{corollary}[theorem]{Corollary}
\newtheorem*{remark}{Remark}

\newcommand{\ww}{\omega^\omega}
\newcommand{\wless}{\omega^{<\omega}}
\newcommand{\tw}{2^\omega}
\newcommand{\tless}{2^{<\omega}}
\newcommand{\lh}{\operatorname{lh}}
\newcommand{\dom}{\operatorname{dom}}

\newcommand{\leT}{\leq_T}

\newcommand{\restr}{\upharpoonright}

\title{1-Genericity and Almost Everywhere Domination}

\author{Xuanheng Zhao}
\address{School of Mathematics\\
 Nanjing University\\
 Nanjing, Jiangsu 210093, People's Republic of China}

\email{xuanheng21@gmail.com}

\begin{document}

\subjclass[2020]{03D30}
\keywords{1-genericity, uniform almost everywhere domination, $\Sigma^0_1$-density}

\begin{abstract}
We prove that there is a
1-generic set which computes a uniformly almost everywhere dominating
function. The proof has two ingredients: a Baire-space
version of the Chong--Downey $\Sigma^0_1$-density criterion \cite{CD} and the forcing of
Cholak--Greenberg--Miller \cite{CGM}.
\end{abstract}

\maketitle

Denis Hirschfeldt raised the following question via a personal email
communication: is there a 1-generic set which computes a uniformly
almost everywhere dominating function? We answer this question in the
affirmative (Corollary \ref{thm:main}).

The result links two phenomena which are usually
studied by different methods. Uniform almost everywhere domination
(u.a.e.d., see Definition~\ref{def:uad}), introduced by Dobrinen and Simpson
\cite{DS} in connection with regularity principles for Lebesgue measure,
is a measure-theoretic strengthening of ordinary domination and hence
imposes substantial computational strength. Simpson \cite{Simpson}
showed that if a set $A$ computes a u.a.e.d. function, then $A'\geq_{tt}\emptyset''$. On the other hand,
Chong and Downey's analysis \cite{CD} of $\Sigma^0_1$-density gives a
structural criterion for being recursive in a 1-generic set. Our theorem
combines these two lines: the Cholak--Greenberg--Miller forcing can be
organized so that the resulting dominating function avoids every r.e.
$\Sigma^0_1$-dense obstruction in Baire space. This shows that
1-genericity is compatible with a strong measure-theoretic domination
property.

Hirschfeldt, Jockusch and Schupp \cite{HJS} introduced a $\{0,\frac{1}{2},1\}$-valued distance on the Turing degrees. They then introduced a division of the Turing degrees into attractive degrees, those that are at distance $\frac{1}{2}$ from measure 1 many degrees; and dispersive degrees, those that are at distance
1 from measure 1 many degrees. It was proved in \cite{HJS} that every weakly 2-generic degree is dispersive, every 1-random degree is attractive, and every a.e.d. degree (it computes a u.a.e.d. function) is attractive. And recently, Hirschfeldt and Royer \cite{HR} showed that if a degree is not DNC, then it is attractive iff it is a.e.d. Hence our result implies that there is an attractive 1-generic degree, which answers the fifth question in \cite[Question 9.2]{HJS} in the negative.

We write $\sigma\sqsubseteq\tau$ if the finite string $\sigma$ is an
initial segment of $\tau$, and $\sigma\sqsubset\tau$ if the containment is
proper. The same notation is used for finite strings and infinite
sequences. If $\sigma\in\wless$, then $\lh(\sigma)$ denotes its length. A
set $G\in\tw$ is \emph{1-generic} if, for every r.e. set $S\subseteq
\tless$, either some initial segment of $G$ lies in $S$, or some initial
segment $\rho\sqsubseteq G$ has no extension in $S$.

\begin{definition}[Baire-space $\Sigma^0_1$-density]
Let $f\in\ww$ and let $W\subseteq\wless$. We say that $W$ is
\emph{dense in $f$} if
\[
   (\forall n)(\exists\sigma\in W)\bigl(f\restr n\sqsubseteq\sigma\bigr).
\]
An infinite r.e. set $Y\subseteq\wless$ is \emph{$\Sigma^0_1$-dense in
$f$} if
\begin{enumerate}[label=(\alph*)]
\item no member of $Y$ is an initial segment of $f$;
\item for every r.e. set $W\subseteq\wless$ which is dense in $f$, some
      member of $W$ extends some member of $Y$.
\end{enumerate}
The corresponding definition in Cantor space is the same, with
$\tless$ in place of $\wless$.
\end{definition}

\begin{theorem}[Chong--Downey \cite{CD}]\label{thm:CD}
Let $A\in\tw$. If there is no infinite r.e. set $Y\subseteq\tless$ which
is $\Sigma^0_1$-dense in $A$, then $A$ is recursive in a 1-generic set.
\end{theorem}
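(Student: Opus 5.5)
We prove the statement directly, by building a 1-generic $G$ together with a Turing functional $\Phi$ such that $\Phi^G=A$. Conditions are finite partial Turing functionals, that is, finite sets of axioms $(\tau,\sigma)$ with $\tau\in\tless$ and $\sigma\in\tless$. They are consistent and monotone in the usual sense. The construction is relative to $A$, so at each stage we have the current string $G_s\in\tless$ with $\Phi_s^{G_s}=A\restr n_s$. Our aim is to meet, for each r.e. $S_e\subseteq\tless$, the requirement that some initial segment of $G$ lies in $S_e$ or that some initial segment has no extension in $S_e$.

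The device is a coding from $A$-approximations to $G$-strings. Define an r.e. set $W_e\subseteq\tless$ as follows. A string $\sigma$ enters $W_e$ when some $\rho\in S_e$ extends $G_s$ and the current finite functional can be consistently extended so that $\Phi^{\rho}=\sigma$. Some care is needed: $W_e$ must be r.e. and independent of $A$. So we parametrize by the finite stage data, namely the pair $(G_s,\Phi_s)$. This is a finite object, and for each such datum $W_e$ is r.e.

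There are now two cases.
\begin{enumerate}[label=(\roman*)]
\item Suppose $W_e$ contains an initial segment $\sigma$ of $A$. Then we extend $G_s$ to the witnessing $\rho\in S_e$ and add axioms so that $\Phi^\rho=\sigma$. The requirement is met, and coding is preserved since $\sigma\sqsubseteq A$.
\item Suppose $W_e$ contains no initial segment of $A$. If $W_e$ is also not dense in $A$, fix $n$ such that no member of $W_e$ extends $A\restr n$. We extend $G_s$ to some $\rho$ with $\Phi^\rho=A\restr n$, keeping the convention that $\Phi$ only outputs along a fresh coding. Then no extension of $\rho$ lies in $S_e$ while still coding $A$.
\end{enumerate}

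In case (ii) this is not quite the needed conclusion. We want no extension of $\rho$ in $S_e$ at all, whereas so far we only know there is none among the coding extensions. To close this gap, the axioms are designed so that every extension of $\rho$ of the relevant kind can be made to code $A\restr n$. This is done by making $\Phi$ read $A$ only from designated bits, with $\rho$ arranged so that any $\rho'\sqsupseteq\rho$ in $S_e$ would yield an element of $W_e$ extending $A\restr n$.

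The remaining possibility is that $W_e$ is dense in $A$ and contains no initial segment of $A$. Then we use the hypothesis that there is no infinite r.e. set $Y$ which is $\Sigma^0_1$-dense in $A$. Let $Y$ be the set of strings in $W_e$ that are incomparable with, or at least not initial segments of, $A$ (this is $W_e$ itself here). Because $Y$ is not $\Sigma^0_1$-dense in $A$, there is an r.e. $V$, dense in $A$, with no member of $V$ extending a member of $Y$. We then use $V$ to refine the coding. We restrict future axioms of $\Phi$ to outputs in $V$, i.e. we wait for $A$-approximations lying in $V$, which exist densely. Under this restriction $W_e$, recomputed relative to the new constraint, can no longer be dense. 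This pushes us back into case (ii) at a later stage.

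The main obstacle is exactly this step: formulating the r.e. sets uniformly and finitely, so that the density failure produced by the hypothesis actually yields a strategy that survives all later requirements. A finite-injury-free priority ordering of the $V$-restrictions, applied successively, is what I would try first. Iterating stage by stage, $G=\bigcup_s G_s$ is 1-generic and $\Phi^G=A$.
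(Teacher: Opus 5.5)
The paper gives no proof of this statement. It is quoted from Chong--Downey \cite{CD} and used as a black box; only the transfer to Baire space, Lemma~\ref{lem:transfer}, is proved. Judged on its own, your proposal has a fatal gap.

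Your functional $\Phi=\bigcup_s\Phi_s$ is assembled from finite sets of axioms chosen by a construction that knows $A$. Its case distinctions are not even decidable from $A$: whether $W_e$ contains an initial segment of $A$ is $\Sigma^0_1$ in $A$, and whether $W_e$ is dense in $A$ is $\Pi^0_2$ in $A$. Such a $\Phi$ is not a partial recursive functional, so $\Phi^G=A$ tells you nothing about $A\leT G$.

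Without effectivity the conclusion is trivial and the hypothesis is never used. For any 1-generic $G$ and any $A$, the axioms $(G\restr n,A\restr n)$ give $\Phi^G=A$. Yet $\emptyset'$, for example, is recursive in no 1-generic, because every 1-generic $G$ satisfies $G'\equiv_T G\oplus\emptyset'$. The whole difficulty of the theorem is this: the reduction must be fixed effectively, so its outputs on strings off the eventual $G$ cannot be tailored to $A$. That includes the strings enumerated into $S_e$. Meanwhile 1-genericity demands that \emph{no} extension of some $\rho\sqsubseteq G$ lie in $S_e$, whatever $\Phi$ outputs on it.

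Once $\Phi$ is required to be recursive, the hypothesis enters in a specific way. For $\rho\sqsubseteq G$, let $Y$ be the set of $\Phi$-outputs of strings $\rho'\in S_e$ with $\rho'\sqsupseteq\rho$. This set is r.e.\ precisely because $\Phi$ is effective.
\begin{itemize}
\item If $Y$ contains an initial segment of $A$, you want to move $G$ into $S_e$.
\item Otherwise the hypothesis gives an r.e.\ $V$, dense in $A$, with no member extending a member of $Y$. Then, if every $\Phi$-output above some $\rho''\sqsubseteq G$ is an initial segment of a member of $V$, no extension of $\rho''$ can lie in $S_e$.
\end{itemize}

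What is missing is a single recursive $\Phi$ with the following properties:
\begin{itemize}
\item $V$-restrictions can be imposed along $G$ while $A$ is still coded.
\item The restrictions bind every extension of $\rho''$, not only those your construction chooses.
\item It cannot be stalled by arbitrary strings: a string in $S_e$ whose output is an initial segment of $A$ must still be extendable to code all of $A$.
\end{itemize}

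Your proposal does not supply this. ``Restrict future axioms of $\Phi$ to outputs in $V$'' constrains only what you add, not arbitrary extensions. ``Making $\Phi$ read $A$ only from designated bits'' makes matters worse, since $S_e$ can then consist of strings with arbitrary bits in those positions. The claim that $W_e$, recomputed, ``can no longer be dense'' is not argued, and your closing paragraph concedes that this step is open. As it stands the proposal is not a proof.
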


The proof below supplies the Baire-space form needed later. Define a
recursive block coding $c:\wless\to\tless$ by
\[
  c(a_0\ldots a_{k-1})
  =1^{a_0}0\,1^{a_1}0\cdots 1^{a_{k-1}}0.
\]
For $f\in\ww$, let
\[
  C(f)=1^{f(0)}0\,1^{f(1)}0\,1^{f(2)}0\cdots\in\tw.
\]
Then $c(\sigma)\sqsubseteq C(f)$ if and only if $\sigma\sqsubseteq f$,
and $f\leT C(f)$ by uniformly reading the lengths of successive blocks of
$1$'s separated by $0$'s.

\begin{lemma}\label{lem:transfer}
Let $f\in\ww$. If $Y\subseteq\tless$ is an infinite r.e. set which is
$\Sigma^0_1$-dense in $C(f)$, then there is an infinite r.e. set
$\widehat Y\subseteq\wless$ which is $\Sigma^0_1$-dense in $f$.
\end{lemma}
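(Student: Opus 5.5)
The plan is to transport $Y$ back to Baire space by decoding each binary string into a block-coded part followed by a trailing run of $1$'s, and then to enlarge this decoded set just enough that it captures exactly the strings whose codes extend members of $Y$.

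\textbf{Decoding.} Every $y\in\tless$ factors uniquely as $y=c(\sigma_y)1^{m_y}$ with $\sigma_y\in\wless$ and $m_y\in\omega$. Here $\sigma_y$ records the complete blocks, i.e.\ the part of $y$ up to and including its last $0$, and $m_y$ is the length of the trailing run of $1$'s. This factorization is computable from $y$. The key observation is that the strings $\tau\in\wless$ with $c(\tau)\sqsupseteq y$ are exactly those with $\tau\sqsupseteq\sigma_y{}^\frown\langle k\rangle$ for some $k\ge m_y$.

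\textbf{Definition of $\widehat Y$.} I would set
\[
\widehat Y=\{\sigma_y{}^\frown\langle k\rangle : y\in Y,\ k\ge m_y\}.
\]
This set is r.e., since it is enumerated uniformly from an enumeration of $Y$. It is infinite because $Y$ is nonempty and, for each such $y$, $k$ ranges over infinitely many values.

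\textbf{Condition (a).} Suppose $\sigma_y{}^\frown\langle k\rangle\sqsubseteq f$ for some $y\in Y$ and $k\ge m_y$. Then
\[
c(\sigma_y{}^\frown\langle k\rangle)=c(\sigma_y)1^k0\sqsubseteq C(f).
\]
Since $k\ge m_y$, this string extends $c(\sigma_y)1^{m_y}=y$, so $y\sqsubseteq C(f)$. That contradicts condition (a) for $Y$ in $C(f)$.

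\textbf{Condition (b).} Let $W\subseteq\wless$ be r.e. and dense in $f$, and put $W'=\{c(\tau):\tau\in W\}$, which is r.e.
\begin{itemize}
\item $W'$ is dense in $C(f)$. Given $n$, we have $C(f)\restr n\sqsubseteq c(f\restr n)$, because each block has length at least one. Density of $W$ gives some $\tau\in W$ with $f\restr n\sqsubseteq\tau$, and then $C(f)\restr n\sqsubseteq c(f\restr n)\sqsubseteq c(\tau)$.
\item Applying condition (b) for $Y$ yields $\tau\in W$ and $y\in Y$ with $y\sqsubseteq c(\tau)$.
\item By the key observation, $\tau\sqsupseteq\sigma_y{}^\frown\langle k\rangle$ for some $k\ge m_y$, and this string lies in $\widehat Y$.
\end{itemize}

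\textbf{Main obstacle.} The only delicate point is the key observation itself, and in particular handling a $y$ that ends partway through a block. The naive translation $y\mapsto\sigma_y$ fails, because $\sigma_y$ may well be an initial segment of $f$ even though $y$ is not. That is why $\widehat Y$ must use the extensions $\sigma_y{}^\frown\langle k\rangle$ with $k\ge m_y$. To verify the observation, note that $c(\tau)$ extends $c(\sigma_y)$ iff $\tau\sqsupseteq\sigma_y$, because block boundaries are marked by $0$'s. Moreover, $c(\tau)$ ends in $0$ and so cannot stop inside the run $1^{m_y}$. Hence it must continue with a block $1^k0$ with $k\ge m_y$, which means $\tau\sqsupseteq\sigma_y{}^\frown\langle k\rangle$. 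The converse inclusion is immediate.
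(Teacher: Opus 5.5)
\textbf{Gap in the key observation.} Your key observation is false in the boundary case $m_y=0$, and it is exactly the step that finishes (b). When $m_y=0$, $y$ is empty or ends in $0$, so $y=c(\sigma_y)$. Then $\tau=\sigma_y$ satisfies $c(\tau)=y\sqsupseteq y$, yet $\tau$ extends no string $\sigma_y{}^\frown\langle k\rangle$. For instance, take the one-bit string $y=0=c(\langle 0\rangle)$ and $\tau=\langle 0\rangle$.

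Your justification says $c(\tau)$ ``cannot stop inside the run $1^{m_y}$'' and so must continue with a block $1^k0$. But when the run is empty, $c(\tau)$ can stop exactly at the end of $c(\sigma_y)$. Consequently, in (b) the pair $\tau\in W$, $y\in Y$ supplied by the $\Sigma^0_1$-density of $Y$ may satisfy $c(\tau)=y$. In that case $\tau$ extends none of the strings that $y$ contributed to your $\widehat Y$, and your argument gives no way to find a different witness. Your set does in fact work, but your proof of (b) does not show it.

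The underlying slip is an over-correction. Discarding $\sigma_y$ is necessary when $m_y>0$. When $m_y=0$, however, $\sigma_y$ is harmless for (a), since $\sigma_y\sqsubseteq f$ would give $y=c(\sigma_y)\sqsubseteq C(f)$.

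\textbf{Repairs.} There are two short fixes.
\begin{enumerate}
\item Put $\sigma_y$ itself into $\widehat Y$ whenever $m_y=0$. The corrected observation then reads: $c(\tau)\sqsupseteq y$ iff $\tau$ extends one of the strings contributed by $y$. Your proofs of (a), with the extra case just noted, and of (b) then go through.
\item Keep your set, but apply (b) for $Y$ to the r.e.\ set of strings $c(\tau)$ with the final $0$ deleted, for $\tau\in W\setminus\{\emptyset\}$. This set is still dense in $C(f)$ (take $\tau\sqsupseteq f\restr(n+1)$). It forces $y$ to be a \emph{proper} initial segment of $c(\tau)$, so $c(\tau)$ really does continue past $c(\sigma_y)$ with a block $1^k0$ where $k\ge m_y$.
\end{enumerate}

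\textbf{Comparison with the paper.} The paper avoids decoding entirely by taking $\widehat Y=\{\sigma\in\wless:(\exists\eta\in Y)\,\eta\sqsubseteq c(\sigma)\}$, the set of all strings whose codes extend a member of $Y$. Then the witness $\tau$ with $y\sqsubseteq c(\tau)$ is itself in $W\cap\widehat Y$, and no case analysis is needed.

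The rest of your argument is correct: r.e.-ness, infinitude, condition (a) for your strings, and the density of $c(W)$ in $C(f)$. Your infinitude argument is also more direct than the paper's.
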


\begin{proof}
Given $Y$, define
\[
   \widehat Y=\{\sigma\in\wless:(\exists\eta\in Y)\;\eta\sqsubseteq c(\sigma)\}.
\]
This set is r.e.: when an element $\eta$ enters $Y$, search through all
finite Baire strings $\sigma$ and enumerate those satisfying
$\eta\sqsubseteq c(\sigma)$.

We first show that $\widehat Y$ is infinite. For each $k$, let
\[
   V_k=\{c(\sigma):\sigma\in\wless\text{ and }\lh(\sigma)\geq k\}.
\]
Then $V_k$ is r.e. and dense in $C(f)$. Since $Y$ is
$\Sigma^0_1$-dense in $C(f)$, there are $\eta\in Y$ and
$c(\sigma)\in V_k$ such that $\eta\sqsubseteq c(\sigma)$. Thus
$\sigma\in\widehat Y$ and $\lh(\sigma)\geq k$. Since this holds for every
$k$, the set $\widehat Y$ has strings of arbitrarily large length, and
therefore is infinite.

We next check condition (a). Suppose, toward a contradiction, that
$\sigma\in\widehat Y$ and $\sigma\sqsubseteq f$. Choose $\eta\in Y$ with
$\eta\sqsubseteq c(\sigma)$. Since $\sigma\sqsubseteq f$, we have
$c(\sigma)\sqsubseteq C(f)$, and hence $\eta\sqsubseteq C(f)$. This
contradicts condition (a) for $Y$ as a $\Sigma^0_1$-dense set in $C(f)$.

Now let $W\subseteq\wless$ be r.e. and dense in $f$. Put
\[
   c(W)=\{c(\sigma):\sigma\in W\}\subseteq\tless.
\]
Then $c(W)$ is r.e. and dense in $C(f)$: if $\rho\sqsubseteq C(f)$ is
finite, choose $n$ large enough so that $\rho\sqsubseteq c(f\restr n)$;
since $W$ is dense in $f$, there is $\sigma\in W$ extending $f\restr n$,
and then $c(\sigma)$ extends $\rho$. Since $Y$ is $\Sigma^0_1$-dense in
$C(f)$, there are $\eta\in Y$ and $\sigma\in W$ such that
$\eta\sqsubseteq c(\sigma)$. Hence $\sigma\in W\cap\widehat Y$, so some
member of $W$ extends some member of $\widehat Y$. This proves condition
(b).
\end{proof}

\begin{theorem}[Baire-space Chong--Downey criterion]\label{thm:baireCD}
Let $f\in\ww$. If there is no infinite r.e. set
$Y\subseteq\wless$ which is $\Sigma^0_1$-dense in $f$, then $f$ is
recursive in a 1-generic set.
\end{theorem}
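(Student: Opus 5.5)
We argue by contraposition, reducing to Theorem~\ref{thm:CD} through the coding $C$. Suppose $f\in\ww$ admits no infinite r.e. set $Y\subseteq\wless$ which is $\Sigma^0_1$-dense in $f$. Consider $C(f)\in\tw$. If there were an infinite r.e. set $Y\subseteq\tless$ which is $\Sigma^0_1$-dense in $C(f)$, then Lemma~\ref{lem:transfer} would give an infinite r.e. set $\widehat Y\subseteq\wless$ which is $\Sigma^0_1$-dense in $f$. This contradicts our assumption. Hence no infinite r.e. subset of $\tless$ is $\Sigma^0_1$-dense in $C(f)$.

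Now Theorem~\ref{thm:CD} applies to $A=C(f)$, giving a 1-generic set $G$ with $C(f)\leT G$. Since $f\leT C(f)$, as noted after the definition of $C$, transitivity of $\leT$ gives $f\leT G$, which is the conclusion.

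We expect no real obstacle at this stage. The only substantive step is Lemma~\ref{lem:transfer}, which is already proved: there one has to check that density and $\Sigma^0_1$-density transfer correctly through $c$, using that $c(\sigma)\sqsubseteq C(f)$ if and only if $\sigma\sqsubseteq f$. The remaining work is to confirm that the direction of the lemma is the one needed for the contrapositive. It is: the lemma turns a Cantor-space witness for $C(f)$ into a Baire-space witness for $f$, and that is exactly the implication the argument above uses.
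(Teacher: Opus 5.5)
Your proposal is correct and is essentially the paper's proof. You use Lemma~\ref{lem:transfer} contrapositively to conclude that no infinite r.e. subset of $\tless$ is $\Sigma^0_1$-dense in $C(f)$, then apply Theorem~\ref{thm:CD} to obtain a 1-generic $G$ with $C(f)\leT G$, and finish with $f\leT C(f)$.
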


\begin{proof}
Assume that no infinite r.e. subset of $\wless$ is $\Sigma^0_1$-dense in
$f$. By Lemma~\ref{lem:transfer}, there is no infinite r.e. subset of
$\tless$ which is $\Sigma^0_1$-dense in $C(f)$. Applying
Theorem~\ref{thm:CD}, choose a 1-generic set $G\in\tw$ such that
$C(f)\leT G$. Since $f\leT C(f)$, we have $f\leT G$.
\end{proof}

\begin{definition}\label{def:uad}
For $f,g\in\ww$, say that $f$ \emph{dominates} $g$, and write
$g\leq^* f$, if
\[
   (\exists n_0)(\forall n\geq n_0)\; g(n)\leq f(n).
\]
A function $f\in\ww$ is \emph{uniformly almost everywhere dominating (u.a.e.d. for short)} if
\[
 \mu\bigl(\{Z\in\tw:(\forall g\in\ww)(g\leT Z\Rightarrow g\leq^* f)\}\bigr)=1.
\]
\end{definition}

We use the forcing of Cholak, Greenberg, and Miller \cite{CGM}. First, we fix a partial recursive functional
$\Phi$ such that the following
implication holds:
\[
  \mu\bigl(\{Z\in\tw:\Phi^Z\text{ is total and } f \geq^* \Phi^Z\}\bigr)
  =\mu(\dom\Phi)
\]
implies that $f$ is u.a.e.d., where
\[
   \dom\Phi=\{Z\in\tw:\Phi^Z\text{ is total}\}.
\]
Such a functional is obtained by coding each partial
recursive functional into a cylinder \cite[Lemma~1.7]{CGM}.

We assume the usual stage convention: if $\Phi^Z(n)[s]\downarrow$, then
the value is $<s$, and if $\Phi^Z(n)[s]\downarrow$, then
$\Phi^Z(m)[s]\downarrow$ for all $m<n$.

For $n\in\omega$, let
\[
   D_n=\{Z\in\tw:(\forall k<n)\;\Phi^Z(k)\downarrow\}.
\]
For $\tau\in\wless$ and $m\leq n\leq\lh(\tau)$, let
\[
   D_{[m,n)}[\tau]
   =\{Z\in\tw:(\forall k\in[m,n))\;\Phi^Z(k)[\tau(k)]\downarrow<\tau(k)\}.
\]
If $m=n$, then $D_{[m,n)}[\tau]=2^\omega$.

\begin{definition}[CGM forcing \cite{CGM}]\label{def:forcing}
A condition is a pair $p=(\sigma_p,\varepsilon_p)$, where
$\sigma_p\in\wless$ and $\varepsilon_p\in\mathbb Q^+$. We write
$n_p=\lh(\sigma_p)$. For conditions $p,q$, define $q\leq p$ if
\begin{enumerate}[label=(\roman*)]
\item $\sigma_p\sqsubseteq\sigma_q$;
\item $\varepsilon_q\leq\varepsilon_p$;
\item if $\sigma_p\sqsubset\sigma_q$, then
\[
   \mu\bigl(\dom\Phi\setminus D_{[n_p,n_q)}[\sigma_q]\bigr)
   +\varepsilon_q \le \varepsilon_p.
\]
\end{enumerate}
\end{definition}

The following Lemmas~\ref{lem:transitive}--\ref{lem:genericdom} are basic
properties of the CGM forcing proved in \cite{CGM} and needed below.

\begin{lemma}\label{lem:transitive}
The relation $\leq$ is transitive.
\end{lemma}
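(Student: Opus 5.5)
We have to show that if $r\le q\le p$ then $r\le p$. Clauses (i) and (ii) are immediate, since $\sqsubseteq$ and $\le$ on $\mathbb Q^+$ are transitive. So the work is in clause (iii): assuming $\sigma_p\sqsubset\sigma_r$, we must prove
\[
   \mu\bigl(\dom\Phi\setminus D_{[n_p,n_r)}[\sigma_r]\bigr)+\varepsilon_r\le\varepsilon_p .
\]
We split into cases according to which of the inclusions $\sigma_p\sqsubseteq\sigma_q\sqsubseteq\sigma_r$ are proper.

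If $\sigma_q=\sigma_r$, then $\sigma_p\sqsubset\sigma_q$. Clause (iii) for $q\le p$ gives the bound with $\varepsilon_q$ in place of $\varepsilon_r$, and $\varepsilon_r\le\varepsilon_q$ finishes this case. If instead $\sigma_p=\sigma_q$, then $\sigma_q\sqsubset\sigma_r$ and $n_q=n_p$. Clause (iii) for $r\le q$ gives the bound with $\varepsilon_q$ on the right, and $\varepsilon_q\le\varepsilon_p$ finishes this case.

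The main case is when both inclusions are proper. The key observation is that, for $Z$ and $k<n_q$, the condition $\Phi^Z(k)[\sigma_q(k)]\downarrow<\sigma_q(k)$ mentions only $\sigma_q(k)$, and $\sigma_q(k)=\sigma_r(k)$ because $\sigma_q\sqsubseteq\sigma_r$. Hence
\[
   D_{[n_p,n_r)}[\sigma_r]=D_{[n_p,n_q)}[\sigma_q]\cap D_{[n_q,n_r)}[\sigma_r].
\]
Taking complements inside $\dom\Phi$ and using subadditivity of $\mu$,
\[
   \mu\bigl(\dom\Phi\setminus D_{[n_p,n_r)}[\sigma_r]\bigr)\le
   \mu\bigl(\dom\Phi\setminus D_{[n_p,n_q)}[\sigma_q]\bigr)+\mu\bigl(\dom\Phi\setminus D_{[n_q,n_r)}[\sigma_r]\bigr).
\]
Add $\varepsilon_r$ to both sides. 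Clause (iii) for $r\le q$ bounds the second term plus $\varepsilon_r$ by $\varepsilon_q$. Clause (iii) for $q\le p$ then bounds the first term plus $\varepsilon_q$ by $\varepsilon_p$, which gives the required inequality.

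The only point needing care is the splitting identity for $D$, which depends on $\sigma_r$ extending $\sigma_q$. Measurability is not an issue: each $D_{[m,n)}[\tau]$ is a finite union of cylinders, and $\dom\Phi$ is a $\Pi^0_2$ class, so all the sets involved are Borel.
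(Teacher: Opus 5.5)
Your proof is correct. Splitting $[n_p,n_r)$ at $n_q$ works because $D_{[m,n)}[\tau]$ depends only on the values $\tau(k)$ for $k\in[m,n)$, and subadditivity then chains the two clause-(iii) inequalities; this is the standard verification for the Cholak--Greenberg--Miller ordering, which the paper cites without proof rather than proving itself. As a minor simplification, you can drop the case split: when two stems coincide, $D_{[m,m)}[\tau]=2^\omega$, so the inequality in (iii) follows from (ii) alone.
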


\begin{lemma}\label{lem:length}
For every condition $p$ and every $N$, there is a condition $q\leq p$
such that $n_q>N$.
\end{lemma}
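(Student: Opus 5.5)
It suffices to extend $\sigma_p$ by a single entry and then iterate. Given $p$ and $N$, we produce $q_0=p\geq q_1\geq q_2\geq\cdots$, each $q_{i+1}$ extending $\sigma_{q_i}$ by one entry. After at most $N+1$ steps the length exceeds $N$, and Lemma~\ref{lem:transitive} gives $q\leq p$. So the whole proof is to show that for every condition $p$ there is $q\leq p$ with $n_q=n_p+1$.

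Fix $p$ and put $n=n_p$. Since $\dom\Phi\subseteq D_{n+1}$, we have $\mu(\dom\Phi\setminus D_{[n,n+1)}[\sigma_p{}^\frown s])\leq\mu(D_{n+1}\setminus D_{[n,n+1)}[\sigma_p{}^\frown s])$. The set $D_{[n,n+1)}[\sigma_p{}^\frown s]$ is the set of $Z$ with $\Phi^Z(n)[s]\downarrow<s$. By the stage convention, values converging by stage $s$ are $<s$, so this set is just $\{Z:\Phi^Z(n)[s]\downarrow\}$. As $s\to\infty$ these sets increase to $\{Z:\Phi^Z(n)\downarrow\}\supseteq D_{n+1}$. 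By continuity of measure, $\mu(\dom\Phi\setminus D_{[n,n+1)}[\sigma_p{}^\frown s])\to 0$.

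Choose $s$ so that this measure is below $\varepsilon_p/2$. Set $\sigma_q=\sigma_p{}^\frown s$ and let $\varepsilon_q$ be a positive rational with $\varepsilon_q\leq\varepsilon_p-\mu(\dom\Phi\setminus D_{[n,n+1)}[\sigma_q])$; for example $\varepsilon_q=\varepsilon_p/2$ works. Then $\sigma_q$ extends $\sigma_p$ and $\varepsilon_q\leq\varepsilon_p$. Since $D_{[n_p,n_q)}[\sigma_q]=D_{[n,n+1)}[\sigma_q]$, condition (iii) holds by the choice of $s$. So $q\leq p$.

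The main point to get right is the monotone convergence step. It needs $\dom\Phi\subseteq\bigcup_s\{Z:\Phi^Z(n)[s]\downarrow\}$, which holds because $\dom\Phi$ consists of the $Z$ with $\Phi^Z$ total. The only other care needed is with the stage-convention inequality $\Phi^Z(n)[s]<s$. Rationality of $\varepsilon_q$ is not a problem, since we only need an upper bound on the measure and never its exact (possibly irrational) value.
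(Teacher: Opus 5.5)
Your argument is correct: choosing the new entry $s$ large enough that $\mu(\dom\Phi\setminus\{Z:\Phi^Z(n_p)[s]\downarrow\})<\varepsilon_p/2$, setting $\varepsilon_q=\varepsilon_p/2$, and iterating via Lemma~\ref{lem:transitive} does produce $q\leq p$ with $n_q>N$. The paper does not prove this lemma itself but defers to \cite{CGM}, and your one-coordinate extension by continuity of measure is the standard argument for this basic property of the CGM forcing.
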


\begin{lemma}\label{lem:promise}
For every condition $p$ and every rational $\eta>0$, there is a condition
$q\leq p$ such that $\varepsilon_q<\eta$.
\end{lemma}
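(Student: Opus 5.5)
The plan is to take $q$ with the same string as $p$ and only shrink the rational. Set
\[
   q=\bigl(\sigma_p,\ \min(\varepsilon_p,\eta/2)\bigr).
\]
Then $q$ is a condition, because $\sigma_q=\sigma_p\in\wless$ and $\varepsilon_q$ is a positive rational (both $\varepsilon_p$ and $\eta/2$ are). Also $\varepsilon_q\le\eta/2<\eta$, so the bound on $\varepsilon_q$ holds.

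It remains to check $q\leq p$ against the three clauses of Definition~\ref{def:forcing}.
\begin{enumerate}[label=(\roman*)]
\item $\sigma_p\sqsubseteq\sigma_q$ holds since the two strings are equal.
\item $\varepsilon_q\le\varepsilon_p$ holds by the choice of the minimum.
\item The measure requirement is conditional on $\sigma_p\sqsubset\sigma_q$ being a proper extension. Since $\sigma_q=\sigma_p$, the hypothesis fails and the clause holds vacuously.
\end{enumerate}

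There is no real obstacle here. The only thing to notice is that clause (iii) constrains only proper extensions of the string. So lowering the promise $\varepsilon$ while keeping the string fixed is always permitted, and no measure estimate on $\dom\Phi$ is needed.
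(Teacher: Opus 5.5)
Your proof is correct. Keeping $\sigma_q=\sigma_p$ makes clause (iii) of Definition~\ref{def:forcing} vacuous, so only $\varepsilon_q\le\varepsilon_p$ needs checking, and $\min(\varepsilon_p,\eta/2)$ is a positive rational below $\eta$. The paper does not prove this lemma itself; it cites it as a basic property of the Cholak--Greenberg--Miller forcing. With the definition as stated in the paper, your argument is the immediate one and is all that is needed.
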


\begin{lemma}\label{lem:cgmdense}
Let $p$ be a condition. There are an r.e. set $S\subseteq\wless$, a
condition $p^*\leq p$, and a number $N$ such that:
\begin{enumerate}[label=(\roman*)]
\item for every $\tau\in S$, there is a condition $q\leq p$ with
      $\sigma_q=\tau$;
\item for every condition $q\leq p^*$, if $n_q>N$, then $\sigma_q\in S$.
\end{enumerate}
\end{lemma}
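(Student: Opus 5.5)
The plan is to take $p^*$ to be $p$ with its $\varepsilon$ halved, and to choose $N$ and a finite stage $s$ so that $D_N$ approximates $\dom\Phi$ in measure. The set $S$ can then be defined by a condition on clopen sets, so it will even be recursive. Throughout write $\varepsilon=\varepsilon_p$.

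\emph{Choice of $p^*$, $N$ and $s$.} Let $p^*=(\sigma_p,\varepsilon/2)$. Then $p^*\leq p$: clause (iii) of Definition~\ref{def:forcing} is vacuous since $\sigma_{p^*}=\sigma_p$, and $\varepsilon/2\leq\varepsilon$. The sets $D_n$ decrease and $\bigcap_n D_n=\dom\Phi$. By continuity of measure, fix $N\geq n_p$ with $\mu(D_N\setminus\dom\Phi)<\varepsilon/4$. The set $D_N$ is the increasing union of the clopen sets $D_N[s]$, so fix a stage $s$ with $\mu(D_N\setminus D_N[s])<\varepsilon/8$. Now define
\[
S=\bigl\{\tau\in\wless:\ \sigma_p\sqsubset\tau,\ \lh(\tau)>N,\ \mu\bigl(D_N[s]\setminus D_{[n_p,\lh(\tau))}[\tau]\bigr)<3\varepsilon/4\bigr\}.
\]
Both $D_N[s]$ and $D_{[n_p,\lh(\tau))}[\tau]$ are clopen sets computable from $\tau$, so their relative measure is a computable rational. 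Hence $S$ is recursive, and in particular r.e.

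\emph{Clause (i).} Let $\tau\in S$ and abbreviate $D=D_{[n_p,\lh(\tau))}[\tau]$. Since $\dom\Phi\subseteq D_N$,
\[
\mu(\dom\Phi\setminus D)\le\mu(D_N\setminus D)\le\mu(D_N[s]\setminus D)+\varepsilon/8<7\varepsilon/8.
\]
Let $q=(\tau,\varepsilon/8)$. Then $\sigma_p\sqsubset\tau$ and $\varepsilon/8\le\varepsilon$, and the display gives $\mu(\dom\Phi\setminus D)+\varepsilon/8\le\varepsilon$. So $q\leq p$ with $\sigma_q=\tau$.

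\emph{Clause (ii).} Let $q\leq p^*$ with $n_q>N$, and abbreviate $D=D_{[n_p,n_q)}[\sigma_q]$. Then $\sigma_p\sqsubset\sigma_q$ because $N\ge n_p$. Clause (iii) for $q\le p^*$ gives $\mu(\dom\Phi\setminus D)\le\varepsilon/2-\varepsilon_q<\varepsilon/2$. Therefore
\[
\mu(D_N[s]\setminus D)\le\mu(D_N\setminus D)\le\mu(\dom\Phi\setminus D)+\mu(D_N\setminus\dom\Phi)<\varepsilon/2+\varepsilon/4,
\]
so $\sigma_q\in S$.

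The main obstacle is that $\dom\Phi$ is only $\Pi^0_2$, so the measure condition in Definition~\ref{def:forcing}(iii) cannot be enumerated directly. The choice of $N$ and $s$ handles this by replacing $\dom\Phi$ with the clopen set $D_N[s]$ and absorbing the error into the slack created by halving $\varepsilon$ in $p^*$. The margins $\varepsilon/4$ and $\varepsilon/8$ are what make clause (ii) hold with strict inequality and leave the positive value $\varepsilon/8$ as the $\varepsilon$ of the condition in clause (i).
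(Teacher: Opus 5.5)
Your proof is correct. The choice $p^*=(\sigma_p,\varepsilon_p/2)$ is a condition below $p$, since clause (iii) is vacuous. The set $S$ is recursive because $D_N[s]$ and $D_{[n_p,\lh(\tau))}[\tau]$ are clopen sets with canonical indices computable from $\tau$. The estimates for clauses (i) and (ii) go through exactly as you wrote them.

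The paper itself gives no argument for this lemma. It only remarks that the explicit form follows from the proof of \cite[Lemma 4.7]{CGM}. So your write-up is not a different route so much as a self-contained verification of what the paper borrows from \cite{CGM}.

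The key point, which you isolate correctly, is that the stem condition in Definition~\ref{def:forcing}(iii) cannot be enumerated directly, because $\dom\Phi$ is $\Pi^0_2$. It becomes decidable once $\dom\Phi$ is replaced by the clopen set $D_N[s]$. The approximation error is then absorbed in two places: by halving the promise in $p^*$ (for clause (ii)) and by using the small promise $\varepsilon_p/8$ (for clause (i)).

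This makes the lemma independent of the details of \cite{CGM}. It also yields a recursive $S$, slightly more than the application in Theorem~\ref{thm:avoid} requires.
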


\begin{remark}\label{rem:lemma10}
This is the explicit form of \cite[Lemma 4.7]{CGM} which follows from the proof given there.
\end{remark}

\begin{lemma}\label{lem:genericdom}
Let $(p_s)_{s\in\omega}$ be a descending sequence of conditions such that
\[
   (\forall N)(\exists s)\; n_{p_s}>N
   \quad\text{and}\quad
   (\forall k)(\exists s)\; \varepsilon_{p_s}<2^{-k}.
\]
Let
\[
   f=\bigcup_s\sigma_{p_s}.
\]
Then $f\in\ww$ and $f$ is u.a.e.d.
\end{lemma}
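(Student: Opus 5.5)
The plan is to prove Lemma~\ref{lem:genericdom} in two parts: first that $f$ is a total function, then that it is u.a.e.d., using the implication fixed before Definition~\ref{def:forcing}.

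\textbf{Totality.} Since the sequence is descending, clause (i) of the order gives $\sigma_{p_s}\sqsubseteq\sigma_{p_t}$ for $s\le t$. Together with $n_{p_s}\to\infty$, this shows that $f=\bigcup_s\sigma_{p_s}$ is a total element of $\ww$.

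\textbf{Domination.} By the choice of $\Phi$, it suffices to show
\[
\mu\bigl(\{Z\in\dom\Phi: \Phi^Z\le^* f\}\bigr)=\mu(\dom\Phi).
\]
Fix $k$ and choose $s$ with $\varepsilon_{p_s}<2^{-k}$. The key claim is that the set
\[
E_s=\{Z\in\dom\Phi:(\forall n\ge n_{p_s})\;\Phi^Z(n)[f(n)]\downarrow<f(n)\}
\]
satisfies $\mu(\dom\Phi\setminus E_s)\le\varepsilon_{p_s}$. Every $Z\in E_s$ has $\Phi^Z(n)<f(n)$ for all $n\ge n_{p_s}$, so $\Phi^Z\le^* f$. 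Granting the claim, the measure of the set of $Z\in\dom\Phi$ with $\Phi^Z\not\le^* f$ is at most $2^{-k}$ for every $k$, hence it is $0$, which is exactly the required equality.

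\textbf{Proof of the claim.} For $t>s$ with $n_{p_t}>n_{p_s}$, clause (iii) of $p_t\le p_s$ gives
\[
\mu\bigl(\dom\Phi\setminus D_{[n_{p_s},n_{p_t})}[\sigma_{p_t}]\bigr)\le\varepsilon_{p_s}-\varepsilon_{p_t}<\varepsilon_{p_s}.
\]
Membership in $D_{[n_{p_s},n_{p_t})}[\sigma_{p_t}]$ depends only on the values $\sigma_{p_t}(k)=f(k)$ for $k\in[n_{p_s},n_{p_t})$. Hence these sets decrease as $t$ increases, and their intersection over all $t$ is exactly the set of $Z$ with $\Phi^Z(n)[f(n)]\downarrow<f(n)$ for every $n\ge n_{p_s}$. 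By continuity of measure from above, intersecting with $\dom\Phi$ gives $\mu(\dom\Phi\setminus E_s)\le\varepsilon_{p_s}$.

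The only step that needs care is this monotone-intersection argument: one must check that the cylinders are nested because the strings $\sigma_{p_t}$ cohere along $f$, which holds by clause (i). The rest is bookkeeping.
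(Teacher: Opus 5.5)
Your proof is correct and is essentially the standard argument from \cite{CGM} on which the paper relies; the paper itself states this lemma without proof. Clause (iii) of $p_t\le p_s$, combined with continuity of measure, bounds by $\varepsilon_{p_s}$ the measure of those $Z\in\dom\Phi$ for which $\Phi^Z(n)[f(n)]\downarrow<f(n)$ fails for some $n\ge n_{p_s}$, and letting $\varepsilon_{p_s}\to 0$ yields the hypothesis of the implication fixed for $\Phi$. The one step you leave implicit is that $p_t\le p_s$ holds for all $t>s$, not just for consecutive indices; this is exactly Lemma~\ref{lem:transitive}.
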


Fix an effective enumeration $(W_e)_{e\in\omega}$ of all r.e. subsets of
$\wless$.

\begin{theorem}\label{thm:avoid}
There is a u.a.e.d. function $f\in\ww$ such
that no infinite r.e. set $Y\subseteq\wless$ is $\Sigma^0_1$-dense in $f$.
\end{theorem}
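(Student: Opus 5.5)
We build a descending sequence of CGM conditions $p_0\geq p_1\geq p_2\geq\cdots$ and let $f=\bigcup_s\sigma_{p_s}$. Three kinds of requirements are interleaved at stages $s=3e+i$. Stages with $i=0$ use Lemma~\ref{lem:length} to make $n_{p_s}>s$. Stages with $i=1$ use Lemma~\ref{lem:promise} to make $\varepsilon_{p_s}<2^{-s}$. Together these guarantee, by Lemma~\ref{lem:genericdom}, that $f\in\ww$ and $f$ is u.a.e.d. Stages with $i=2$ handle the requirement $R_e$: \emph{if $W_e$ is infinite and satisfies condition (a) for $f$, then some r.e. set dense in $f$ has no member extending a member of $W_e$.} The construction need not be effective, so at each stage we may decide questions noneffectively.

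To meet $R_e$ at stage $s$, given $p=p_{s-1}$, we first apply Lemma~\ref{lem:cgmdense} to $p$. This yields an r.e. set $S$, a condition $p^*\leq p$, and a number $N$. Set $T=\{\tau\in S:\lh(\tau)>N\}$, which is r.e. Every extension of $p^*$ of length $>N$ has its stem in $T$, and every $\tau\in S$ is the stem of some $q\leq p$. We then split into two cases.

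In the first case, some $\tau\in T$ with $\sigma_{p^*}\sqsubseteq\tau$ extends some member $\eta$ of $W_e$. We want to extend to a condition whose stem extends $\eta$, which makes $f$ extend $\eta$ and so violates (a) for $W_e$. The difficulty is that $\tau\in S$ only gives a condition $q\leq p$ with $\sigma_q=\tau$, not $q\leq p^*$. I would first try to arrange (by building $S$ relative to $p^*$, or by re-reading the proof of \cite[Lemma 4.7]{CGM}) that $S$ consists of stems of extensions of $p^*$ itself. Failing that, I would take $p_s=q$ and accept weaker bookkeeping, which is harmless since only $q\leq p$ matters for the descending chain.

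In the second case, no such $\tau$ exists. We take $p_s=p^*$ and extend its stem beyond length $N$ later via the $i=0$ stages. Then the set $U=\{\tau\in T: \sigma_{p^*}\sqsubseteq\tau\}$ is r.e. It is dense in $f$, because every later stem $\sigma_{p_t}$ with $n_{p_t}>N$ lies in $T$, and stems of arbitrarily long length occur along $f$. By the case assumption, no member of $U$ extends a member of $W_e$. So condition (b) fails for $W_e$, and $W_e$ is not $\Sigma^0_1$-dense in $f$.

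Since every infinite r.e. $Y$ equals some $W_e$, meeting all $R_e$ gives the theorem. The main obstacle is the exact form of Lemma~\ref{lem:cgmdense}. Density in Case 2 uses clause (ii) for $q\leq p^*$, while Case 1 needs the witness $\tau$ to be realized by a condition compatible with the current sequence. Here (i) only gives $q\leq p$. This suffices because we simply abandon $p^*$ and continue from $q$. The remaining point to verify is that Case 1 then still leaves the future construction free, which it does since $q$ is a genuine condition below $p$.
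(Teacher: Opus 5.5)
Your proof is correct. Its skeleton matches the paper's: interleaved length, promise and density stages; Lemma~\ref{lem:cgmdense} applied to the current condition; and, in the negative case, passing to $p^*$ so that the r.e.\ set from the lemma is dense in $f$.

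The real difference is where you draw the case line.

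\emph{The paper's split.} It asks whether some condition $(\sigma,\delta)\leq p$ has its stem in $W_e$. In the negative case it must then prove that $S_e$ contains no extension of a member of $W_e$. This needs an extra fact about the CGM ordering: stems may be truncated. That is, if $q\leq p$ and $\sigma_p\sqsubset\sigma\sqsubseteq\sigma_q$, then $(\sigma,\varepsilon_q)\leq p$. This is checked from clause (iii) of Definition~\ref{def:forcing} via $D_{[n_p,\lh(\sigma_q))}[\sigma_q]\subseteq D_{[n_p,\lh(\sigma))}[\sigma]$. The standing assumption (a) is also used, to dispose of members of $W_e$ below $\sigma_p$.

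\emph{Your split.} You ask whether the r.e.\ set from the lemma already contains an extension of a member of $W_e$. Your Case~2 is then immediate from the case hypothesis. Your Case~1 needs only clause (i): any $q\leq p$ with $\sigma_q=\tau$ can serve as $p_s$, and it gives $\eta\sqsubseteq\tau\sqsubseteq f$.

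\emph{What each buys.} You use Lemma~\ref{lem:cgmdense} purely as a black box and never open up the definition of the ordering. Your Case~2 also refutes (b) without presupposing (a). The paper's split is the canonical ``meet or avoid'' dichotomy. It depends only on $p$ and $W_e$, not on the particular $S_e$, and in Case~1 it places a member of $W_e$ itself as a stem.

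Three small remarks:
\begin{enumerate}
\item The ``difficulty'' you raise in Case~1 is not one. $p^*$ plays no role there, so $p_s=q$ is simply the right choice, not a fallback.
\item Passing to $T$ and $U$ is harmless but unnecessary; you could split on $S$ itself.
\item In Case~2, cite Lemma~\ref{lem:transitive} to get $p_t\leq p^*$ before invoking clause (ii).
\end{enumerate}
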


\begin{proof}
We construct a descending sequence of conditions
\[
   p_0\geq p_1\geq p_2\geq\cdots
\]
in the forcing of Definition~\ref{def:forcing}. Start with
$p_0=(\emptyset,1)$.

At the beginning of cycle $e$, assume that $p_{3e}$ has been chosen.

\smallskip
\noindent\emph{Length substage.}
By Lemma~\ref{lem:length}, choose $p_{3e+1}\leq p_{3e}$ such that
$n_{p_{3e+1}}>e$.

\smallskip
\noindent\emph{Promise substage.}
By Lemma~\ref{lem:promise}, choose $p_{3e+2}\leq p_{3e+1}$ such that
$\varepsilon_{p_{3e+2}}<2^{-e}$.

\smallskip
\noindent\emph{$\Sigma^0_1$-density substage for $W_e$.}
Put $p=p_{3e+2}$. Apply Lemma~\ref{lem:cgmdense} to this $p$, and obtain
a r.e. set $S_e$, a condition $p_e^*\leq p$, and a number $N_e$
satisfying clauses (i) and (ii) of that lemma. Now distinguish two cases.

\smallskip
\noindent\emph{Case 1.}
There are $\sigma\in W_e$ and a positive rational $\delta$ such that
$(\sigma,\delta)\leq p$. Choose one such pair and put
\[
   p_{3e+3}=(\sigma,\delta).
\]

\smallskip
\noindent\emph{Case 2.}
No such pair exists. Put
\[
   p_{3e+3}=p_e^*.
\]
This completes the construction.

Let
\[
   f=\bigcup_s\sigma_{p_s}.
\]
The length and promise substages ensure the hypotheses of
Lemma~\ref{lem:genericdom}; hence $f$ is u.a.e.d.

It remains to verify that no r.e. set is $\Sigma^0_1$-dense in $f$. Fix
$e$. We show that $W_e$ fails one of the two defining clauses of
$\Sigma^0_1$-density in $f$.

If Case 1 occurs in cycle $e$, $W_e$ fails clause (a).

Suppose Case 2 occurs. We may assume that no member of $W_e$
is an initial segment of $f$. We claim that the r.e. set $S_e$ is dense in
$f$ and that no member of $S_e$ extends a member of $W_e$. This will show
that $W_e$ fails (b).

First, $S_e$ is dense in $f$. Let $m$ be arbitrary. Since the lengths of
the stems in the constructed sequence are unbounded, choose
$t>3e+3$ such that
\[
   p_t\leq p_{3e+3}=p_e^*
   \quad\text{and}\quad
   n_{p_t}>\max\{m,N_e\}.
\]
By Lemma~\ref{lem:cgmdense}(ii), $\sigma_{p_t}\in S_e$. Since
$\sigma_{p_t}\sqsubseteq f$ and $\lh(\sigma_{p_t})>m$, the string
$\sigma_{p_t}$ extends $f\restr m$. Thus $S_e$ is dense in $f$.

Second, no member of $S_e$ extends a member of $W_e$. Suppose otherwise
that
\[
   \sigma\in W_e,\qquad \tau\in S_e,\qquad \sigma\sqsubseteq\tau.
\]
By Lemma~\ref{lem:cgmdense}(i), choose a condition $q\leq p$ with
$\sigma_q=\tau$. Since $q\leq p$, we have $\sigma_p\sqsubseteq\tau$.
Since both $\sigma$ and $\sigma_p$ are initial segments of $\tau$, they
are comparable. If $\sigma\sqsubseteq\sigma_p$, then
$\sigma_p\sqsubseteq f$, and hence $\sigma\sqsubseteq f$, contrary to our
standing assumption in Case 2. Therefore $\sigma_p\sqsubset\sigma$.

Let $\varepsilon_q$ be the promise of $q$. From $q\leq p$ we have
$\varepsilon_q\leq\varepsilon_p$. Also, since $\sigma_p\sqsubseteq\tau$
and in fact $\sigma_p\sqsubset\tau$, the definition of $q\leq p$ gives
\[
   \mu\bigl(\dom\Phi\setminus D_{[n_p,\lh(\tau))}[\tau]\bigr)
   +\varepsilon_q \le \varepsilon_p.
\]
Moreover, $\sigma_p\sqsubset\sigma\sqsubseteq\tau$ implies
\[
   D_{[n_p,\lh(\tau))}[\tau]
   \subseteq D_{[n_p,\lh(\sigma))}[\sigma].
\]
Therefore
\[
\begin{split}
   \mu\bigl(\dom\Phi\setminus D_{[n_p,\lh(\sigma))}[\sigma]\bigr)
   +\varepsilon_q
   &\leq
   \mu\bigl(\dom\Phi\setminus D_{[n_p,\lh(\tau))}[\tau]\bigr)
   +\varepsilon_q \\
   &\le \varepsilon_p.
\end{split}
\]
Together with $\sigma_p\sqsubset\sigma$ and
$\varepsilon_q\leq\varepsilon_p$, this shows that $(\sigma,
\varepsilon_q)\leq p$. This contradicts Case 2. Hence no member of $S_e$
extends a member of $W_e$.
\end{proof}

\begin{corollary}\label{thm:main}
There is a 1-generic set which computes a u.a.e.d. function.
\end{corollary}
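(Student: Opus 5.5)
The corollary follows by chaining Theorem~\ref{thm:avoid} with the Baire-space criterion, Theorem~\ref{thm:baireCD}. First, apply Theorem~\ref{thm:avoid} to obtain a u.a.e.d. function $f\in\ww$ such that no infinite r.e. set $Y\subseteq\wless$ is $\Sigma^0_1$-dense in $f$. This is exactly the hypothesis of Theorem~\ref{thm:baireCD}. So we get a 1-generic set $G\in\tw$ with $f\leT G$, and $G$ is the required set: it is 1-generic and computes the u.a.e.d. function $f$.

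There is no real obstacle left at this point, because the work has already been done in Theorem~\ref{thm:avoid} and Lemma~\ref{lem:transfer}. The one point to check is that the two notions of ``$\Sigma^0_1$-dense in $f$'' agree. Theorem~\ref{thm:avoid} rules out all infinite r.e. subsets of $\wless$ that are $\Sigma^0_1$-dense in $f$, including those that fail to be infinite only vacuously. Theorem~\ref{thm:baireCD} assumes the absence of precisely such infinite r.e. sets, so the hypotheses match verbatim.

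If one also wants the degree-theoretic consequence mentioned in the introduction, namely an attractive 1-generic degree, one would add the remark from \cite{HJS} that every degree computing a u.a.e.d. function is attractive. That remark is not needed for the corollary as stated.
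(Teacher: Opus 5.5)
Your proof is correct and is exactly the paper's argument: take the u.a.e.d.\ $f$ from Theorem~\ref{thm:avoid}, observe that it satisfies the hypothesis of Theorem~\ref{thm:baireCD} verbatim, and obtain a 1-generic $G$ with $f\leT G$. Your aside about sets that ``fail to be infinite only vacuously'' is unnecessary and somewhat garbled. Both results use the same Baire-space notion, and in fact a finite $Y$ can never satisfy clauses (a) and (b), since some fixed $f\restr n$ is incompatible with all of its members.
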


\begin{proof}
Let $f$ be given by Theorem~\ref{thm:avoid}. By that theorem, no infinite
r.e. set of finite strings is $\Sigma^0_1$-dense in $f$. By
Theorem~\ref{thm:baireCD}, there is a 1-generic set $G\in\tw$ such that
$f\leT G$.
\end{proof}

\section*{Acknowledgements}
The author was supported by Focused Research Grants of Natural Science Foundation of Jiangsu Province No. BK20243060. The author thank Denis Hirschfeldt for helpful suggestions.

\end{document}